\definecolor{webgreen}{rgb}{0,.5,0}
\definecolor{webbrown}{rgb}{.6,0,0}
\theoremstyle{plain}
\newtheorem{theorem}{Theorem}
\newtheorem{corollary}[theorem]{Corollary}
\newtheorem{lemma}[theorem]{Lemma}
\theoremstyle{definition}
\newtheorem{definition}[theorem]{Definition}
\theoremstyle{remark}
\newcommand{\floor}[1]{\left\lfloor#1\right\rfloor}
\newcommand{\ceil}[1]{\left\lceil#1\right\rceil}
\renewcommand{\Function}[2]{%
	\csname ALG@cmd@\ALG@L @Function\endcsname{#1}{#2}%
	\def\curfunc{#1}%
}
\newcommand{\funclabel}[1]{%
	\@bsphack
	\protected@write\@auxout{}{%
		\string\newlabel{#1}{{\curfunc}{\thepage}}%
	}%
	\@esphack
}
\begin{document}
\author{Laura Monroe}
\affil{Ultrascale Systems Research Center, Los Alamos National Laboratory, Los Alamos, NM 87501
\newline
lmonroe@lanl.gov}
\renewcommand\Affilfont{\itshape\small}
\renewcommand\footnotemark{}
	\thanks{
		This publication has been assigned the LANL identifier LA-UR-21-25242. 
		 
		\hspace*{0.82em}This work has been authored by an employee of Triad National Security, LLC, operator of the Los Alamos National Laboratory under Contract No.89233218CNA000001 with the U.S. Department of Energy. This work was also supported by LANL's Ultrascale Systems Research Center at the New Mexico Consortium (Contract No. DE-FC02-06ER25750).).
		The United States Government retains and the publisher, by accepting this work for publication, acknowledges that the United States Government retains a nonexclusive, paid-up, irrevocable, world-wide license to publish or reproduce this work, or allow others to do so for United States Government purposes.  
	} 
\date{\vspace{-5ex}}
\title{Binary Signed-Digit Integers and the Stern Diatomic Sequence
}

\maketitle

\begin{abstract}
	Stern's diatomic sequence is a well-studied and simply defined sequence with many fascinating characteristics. The binary signed-digit representation of integers is an alternative representation of integers with much use in efficient computation, coding theory and cryptography.  

	We link these two ideas here,  showing that the number of $i$-bit binary signed-digit representations of an integer $n$ with $n<2^i$ is the $(2^i-n)^\text{th}$ element in Stern's diatomic sequence. This correspondence makes the vast range of results known for Stern's diatomic sequence available for consideration in the study of binary signed-digit integers. 
	\newline\newline	
\textbf{Keywords:} Binary signed-digit representations; hyperbinary representations; Stern's diatomic sequence.
\newline\newline
\textbf{Mathematics Subject Classification (2010):} 11A63 $\cdot$ 11B83 $\cdot$ 68R01 
\end{abstract}

\section{Introduction}
Integers may be represented in binary signed-digit (BSD) representation, in which each integer is represented in terms of sums or differences of powers of $2$. This is in contrast to binary representation, in which only sums are allowed. 

BSD representations of an integer are not unique. In fact, there are an infinite number of such representations for any non-$0$ integer $n$ using an arbitrary number of bits, and of course, a finite number for any fixed number of bits.  The number of such representations is of interest.

In this paper, we show a correspondence between the number of BSD representations of an integer and Stern's diatomic sequence. 
We first do this directly, using a new recurrence relation for the number of ways to express a non-negative integer in BSD form. 

We also show a direct translation between an $i$-bit BSD representation of an integer $n$ and a hyperbinary representation of the related integer $2^i-1-n$. This gives a second independent proof of the correspondence between the BSD representations and the Stern sequence. 

Finally, we give a simple $\mathcal{O}(\log(n))$ algorithm for calculating the number of BSD representations of $n$, based on an algorithm for calculating the elements of Stern's diatomic sequence.

\section{Number of BSD representations of an integer}
Signed-digit representations of integers have been discussed for almost 300 years, starting with Colson in 1726~\cite{colson1726} and Cauchy in 1840~\cite{cauchy1840}.
Binary signed-digit representation is of great use in computer science for the efficient operations it affords, and has been presented as such since early in the history of modern computing by Shannon~\cite{shannon50}, Booth~\cite{booth1951} and others. This representation has many applications in basic calculation \cite{avizienis1961signed}, coding theory, and elsewhere \cite{morain90,egecioglu90,koblitz91}. A general discussion can be found in \cite{Shallit92aprimer}.

\begin{definition}[BSD representation]\label{def_bsd}
	An integer $n$ is in \emph{BSD representation} when
	$$
		n=\sum_{j=0}^{i-1}b_j2^j \text{, where $i\ge\ceil{\log_2(n)}$, and $b_j \in \{1,0,-1\}$.}
    $$
\end{definition}
Throughout this paper, $f(n,i)$ denotes the number of ways to represent the integer $n$ in BSD form on $i$ bits.

There are $3^i$ different BSD representations of integers on $i$ signed bits. However, these represent only $(2^{i+1}-1)$ integers, since the maximum such integer is $(2^i-1)$ and the minimum is $(-2^i+1)$. There are thus multiple BSD representations for at least some of the $i$-bit BSD integers. 

In this section, we develop and discuss several basic identities on the number of BSD representations of integers $n$, several of which are already known and discussed by Ebeid and Hasan in \cite{ebeid07} and elsewhere.  

These identities will be used in the proof of the correspondence between the sequence $f(n,i)$  and Stern's diatomic sequence, and in the algorithm for calculating the number of BSD representations of an integer $n$ on $i$ bits.

\begin{lemma} \cite{ebeid07}
\label{symmetricity}
$f(n,i) = f(-n,i)$. 
\end{lemma}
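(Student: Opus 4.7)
The plan is to exhibit an explicit bijection between the set of $i$-bit BSD representations of $n$ and the set of $i$-bit BSD representations of $-n$, which immediately gives equality of their cardinalities.

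Concretely, given any $i$-bit BSD representation $n = \sum_{j=0}^{i-1} b_j 2^j$ with $b_j \in \{-1,0,1\}$, I would define the map that sends $(b_0,\dots,b_{i-1})$ to $(-b_0,\dots,-b_{i-1})$. Since $\{-1,0,1\}$ is closed under negation, the image tuple is again a valid $i$-bit BSD digit string, and it represents $\sum_{j=0}^{i-1}(-b_j)2^j = -\sum_{j=0}^{i-1} b_j 2^j = -n$. The map is its own inverse (applying negation twice returns the original tuple), so it is a bijection between the two sets. Counting yields $f(n,i)=f(-n,i)$.

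There is essentially no obstacle here: the closure of the digit alphabet under negation is the only property needed, and it holds by definition. The argument is entirely formal and does not require any of the later machinery (the Stern correspondence or the hyperbinary translation). I would simply state the bijection, verify in one line that it is well-defined and involutive, and conclude.
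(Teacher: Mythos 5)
Your proof is correct and is essentially the paper's own argument: the paper's one-line proof is exactly the digit-negation bijection you describe, which you have simply spelled out in more detail (well-definedness and involutivity).
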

\begin{proof}
The representations of $-n$ are obtained by multiplying each bit in the BSD representation of $n$ by $-1$. 
\end{proof}

\begin{theorem} 
\label{theorem_bsd_recursive}
The number of ways to express a non-negative integer $n$ in BSD form on $i \ge 0$ bits is expressed by the recurrence relation
$$
f(n,i) = f(|n-2^{i-1}|, i-1) + f(n,i-1)
\text{, where } f(0,i) =  1.
$$
\end{theorem}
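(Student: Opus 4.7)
The plan is to condition on the most significant bit $b_{i-1}$ of an arbitrary $i$-bit BSD representation of $n$. Since $b_{i-1} \in \{-1, 0, 1\}$, this partitions the set of $i$-bit BSD representations of $n$ into three disjoint classes, each corresponding to BSD representations of a related integer on the lower $i-1$ bits. I would therefore compute the contribution from each class and sum them to obtain $f(n,i)$.

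The case $b_{i-1} = 0$ is immediate: the lower $i-1$ bits constitute an arbitrary $(i-1)$-bit BSD representation of $n$, contributing $f(n, i-1)$. For $b_{i-1} = 1$, the lower bits must represent $n - 2^{i-1}$. When $n \ge 2^{i-1}$ this is non-negative and contributes $f(n - 2^{i-1}, i-1) = f(|n - 2^{i-1}|, i-1)$; when $0 \le n < 2^{i-1}$ the value is negative, and I would invoke Lemma \ref{symmetricity} to rewrite $f(n - 2^{i-1}, i-1) = f(2^{i-1} - n, i-1) = f(|n - 2^{i-1}|, i-1)$. Thus in both sub-cases this class contributes $f(|n - 2^{i-1}|, i-1)$. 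For $b_{i-1} = -1$, the lower $i-1$ bits would need to represent $n + 2^{i-1}$; since $n \ge 0$, this value is at least $2^{i-1}$, which exceeds the maximum $2^{i-1} - 1$ attainable on $i-1$ BSD bits, so this class contributes $0$. Summing the three contributions yields the claimed recurrence. The boundary value $f(0,i) = 1$ follows by observing that $0$ admits only the all-zeros representation on any number of bits, equivalently by applying the recurrence using the same maximum-value bound to see that $f(2^{i-1}, i-1) = 0$.

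The main subtlety lies in the $b_{i-1} = 1$ branch when $n < 2^{i-1}$: the appearance of the absolute value in the statement is exactly what accommodates the negative intermediate quantity $n - 2^{i-1}$, and this step relies essentially on Lemma \ref{symmetricity}, which is already available. The only other potential pitfall is rigorously justifying that the $b_{i-1} = -1$ case vanishes for non-negative $n$, but this follows immediately from the elementary bound that any $(i-1)$-bit BSD expression lies in $[-(2^{i-1}-1), 2^{i-1}-1]$.
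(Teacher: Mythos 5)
Your proposal is correct and follows essentially the same route as the paper: partition the $i$-bit representations by the value of the top digit $b_{i-1}$, rule out $b_{i-1}=-1$ for non-negative $n$ via the bound on what $i-1$ signed digits can represent, and use Lemma~\ref{symmetricity} to absorb the sign of $n-2^{i-1}$ into the absolute value. Your treatment is in fact slightly more explicit than the paper's (which asserts the exclusion of $b_{i-1}=-1$ without spelling out the bound, and does not separately address $f(0,i)=1$), but there is no substantive difference in approach.
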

\begin{proof}
Let $n\ge 0$. Since $n$ is non-negative, the value of the $(i-1)$ position can not be $-1$. The BSD representations of $n$ with $i$ bits can then be partitioned into those with a 1 in the $(i-1)$ place, and those with a 0 in the $(i-1)$ place. 

The set of representations with 1 in the $(i-1)$ place has a one-to-one correspondence with the representations of $n-2^{i-1}$ of length $i-1$. This number is $f(n-2^{i-1}, i-1)$, and by Lemma~\ref{symmetricity}, this is $f(|n-2^{i-1}|, i-1)$.

The set of representations with 0 in the $(i-1)$ place has a one-to-one correspondence with the set of representations expressing $n$ of length $i-1$. There are  $f(n,i-1)$ of these.
\end{proof}
The following are some properties of $f(n,i)$ on non-negative integers $n$ following directly from Theorem~\ref{theorem_bsd_recursive}. These are used throughout this paper. Equation~\ref{bsd_Equation_2} is shown by Ebeid and Hasan in \cite{ebeid07}. 
\begin{align}
    &f(n,i) = 0\text{, when } n \ge 2^i.\label{bsd_Equation_1}\\
    &f(0,i) = 1\text{, for all }i.\label{bsd_Equation_2}\\
    &f(2^k \cdot n ,i) = f(n,i-k).\label{bsd_Equation_3}
\end{align}
Corollary~\ref{props_fni2} follows from Theorem~\ref{theorem_bsd_recursive} and is also shown by Ebeid and Hasan in \cite{ebeid07}.
\begin{corollary}\cite{ebeid07}\label{props_fni2} Let $n$ be a non-negative integer. The following are properties of $f(n,i)$:
	\begin{align*}
	f(2n,i) &= f(n,i-1).\\
	f(2n+1,i) &=f(n,i-1)+f(n+1,i-1).
	\end{align*}
\end{corollary}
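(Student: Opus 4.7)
The plan is to argue both identities by case analysis on the value of the lowest-order bit $b_0$ of a length-$i$ BSD representation $(b_0, b_1, \ldots, b_{i-1})$, mirroring the role that the highest-order bit $b_{i-1}$ played in Theorem~\ref{theorem_bsd_recursive}. The key observation is a parity one: since $\sum_{j=1}^{i-1} b_j 2^j$ is always even, the parity of the integer represented is controlled entirely by $b_0 \in \{-1,0,1\}$. Hence even integers force $b_0 = 0$, and odd integers force $b_0 = \pm 1$.

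For the first identity $f(2n,i) = f(n,i-1)$, since $2n$ is even we must have $b_0 = 0$, so $\sum_{j=1}^{i-1} b_j 2^j = 2n$ which, after reindexing $c_j := b_{j+1}$, becomes $\sum_{j=0}^{i-2} c_j 2^j = n$. The map $(0, b_1, \ldots, b_{i-1}) \mapsto (c_0, \ldots, c_{i-2})$ is then a bijection between BSD representations of $2n$ on $i$ bits and BSD representations of $n$ on $i-1$ bits, which proves the equation. (Equivalently, one can simply quote Equation~\ref{bsd_Equation_3} with $k=1$.)

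For the second identity, since $2n+1$ is odd, I would split according to whether $b_0 = +1$ or $b_0 = -1$. If $b_0 = 1$, the upper bits must satisfy $\sum_{j=1}^{i-1} b_j 2^j = 2n$, which shifts down to an $(i-1)$-bit BSD representation of $n$, contributing $f(n, i-1)$ configurations. If $b_0 = -1$, the upper bits must satisfy $\sum_{j=1}^{i-1} b_j 2^j = 2n+2$, which shifts down to an $(i-1)$-bit BSD representation of $n+1$, contributing $f(n+1, i-1)$ configurations. Summing over these two disjoint cases gives the claim. The only step requiring care is verifying that the shift-down bijection produces exactly $i-1$ bits in each case, and there is no real obstacle beyond this bookkeeping. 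An alternative derivation is to unfold Theorem~\ref{theorem_bsd_recursive} on $f(2n,i)$ and $f(2n+1,i)$ simultaneously and induct on $i$, but the bit-level argument above is shorter and more transparent.
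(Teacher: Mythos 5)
Your proof is correct, but it takes a different route from the paper's. The paper does not prove Corollary~\ref{props_fni2} from scratch; it asserts that the corollary follows from Theorem~\ref{theorem_bsd_recursive}, which partitions the $i$-bit representations of $n$ according to the value of the \emph{highest} signed digit $b_{i-1}$. Extracting the two stated identities from that top-bit recurrence is not a one-line matter — it essentially requires an induction on $i$ (or an appeal to Equation~\ref{bsd_Equation_3}), which the paper leaves implicit. You instead partition on the \emph{lowest} digit $b_0$, where the parity constraint does all the work: $b_0$ must be $0$ for even targets and $\pm 1$ for odd ones, and shifting the remaining digits down one position gives explicit bijections onto the $(i-1)$-bit representations of $n$ (resp.\ of $n$ and $n+1$). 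This is self-contained, avoids induction, and makes the structural match with the parity-based recursion defining Stern's sequence in Definition~\ref{def_sd} completely transparent; the paper's route has the advantage of reusing Theorem~\ref{theorem_bsd_recursive} rather than introducing a second decomposition. Your argument is sound as written; the only bookkeeping point, which you flag yourself, is that the shift map lands on exactly $i-1$ digits, and that the counts $f(n,i-1)$ and $f(n+1,i-1)$ correctly evaluate to $0$ when the shifted target exceeds $2^{i-1}-1$, consistent with Equation~\ref{bsd_Equation_1}.
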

\section{BSD representations and Stern's diatomic sequence}
Stern's diatomic sequence, or the Stern-Brocot sequence, is a well-known integer sequence first discussed by M.A. Stern in 1858 \cite{Stern1858}. Stern's diatomic sequence has been studied in detail for some 160 years, and much is known about it. Some of these details may be found in \cite{calkin00,carlitz1964,dijkstra82,lehmer29,lind1969,shannon50,Stern1858}. In particular, Lehmer provides a discussion in \cite{lehmer29}, Northshield gives a good overview in  \cite{northshield2010stern}, and the Online Encyclopedia of Integer Sequences entry A002487 \cite{oeis-web} has many references for this sequence. 

We show in this section that the number of BSD representations of $n$ with $n<2^i$ is the $(2^i-n)^{\text{th}}$ entry in Stern's diatomic sequence. 
The extensive set of identities concerning Stern's diatomic sequence may be used to great effect to understand the number of BSD representations of an integer $n$ on $i$ bits.
 
Stern's diatomic sequence  is defined recursively:
\begin{definition} [Stern's diatomic sequence]\cite{Stern1858} \label{def_sd}
Let $n$ be a non-negative integer. \emph{Stern's diatomic sequence} $c(n)$ is defined as follows:
	$$
		c(n)= 
		\begin{cases}
		0 & \text{if $n=0$,} \\
		1 & \text{if $n=1$,} \\
		\text{c($m$)} & \text{if $n=2m$,} \\
		\text{c($m$)+c($m+1$)} & \text{if $n=2m+1$.} 
		\end{cases}
	$$ 
\end{definition}

The sequence has many interesting properties, a few of which we list here: 
\begin{list}{$\cdot$}{}  
	\item ${c(n)}/{c(n+1)}$ hits each positive rational once, each in lowest terms \cite{Stern1858,calkin00}. These fractions were presented in tree form by Calkin and Wilf in \cite{calkin00}. This tree was generalized by Bates and Mansour in \cite{bates2011}.
	\item $c(n+1)$ is the number of hyperbinary representations of $n$ \cite{carlitz1964,lind1969,reznick90}.
	\item $c(n+1)$ is the number of alternating bit sets in $n$  \cite{finch2003mathematical}. 
	\item $c(n)$ is the Dijkstra \emph{fusc} function  \cite{dijkstra82}. 
\end{list}
Theorem~\ref{theorem_stern_bsd} gives a direct proof that the number of ways positive integers may be expressed in $i$-bit BSD representation is Stern's diatomic sequence up to $2^i$, in reverse. This is shown by exploiting the similarities between Corollary~\ref{props_fni2} and the definition of Stern's diatomic sequence in Definition~\ref{def_sd}. This is the main result of this paper.

\begin{theorem} 
	\label{theorem_stern_bsd}
	Let $f(n,i)$ be the number of ways to express the integer $n$ in BSD representation on $i$ bits, where $0<n<2^i$. Then
	$$
		f(n,i) = c(2^i-n).
	$$
\end{theorem}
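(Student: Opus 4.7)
The plan is to prove the identity by induction on the bit length $i$, exploiting the near-identical parity recurrences satisfied by $f(\cdot,\cdot)$ (Corollary~\ref{props_fni2}) and by Stern's sequence (Definition~\ref{def_sd}). To make the induction go through smoothly at the endpoints, I would actually prove the slightly stronger statement that $f(n,i) = c(2^i - n)$ for all $0 \le n \le 2^i$, and then observe that the desired theorem is the restriction to $0 < n < 2^i$. The extended statement is convenient because $f(0,i)=1=c(2^i)$ (since $c(2^i) = c(2^{i-1}) = \cdots = c(1) = 1$) and $f(2^i,i) = 0 = c(0)$, both of which will appear as boundary terms when we unfold the recurrences.

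For the base case, take $i = 1$: the only nontrivial value is $f(1,1) = 1 = c(1) = c(2^1 - 1)$, and the boundary cases $f(0,1)=1=c(2)$ and $f(2,1) = 0 = c(0)$ hold as noted. For the inductive step, assume the identity holds at length $i-1$ and fix $0 \le n \le 2^i$. Split on the parity of $n$.

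If $n = 2k$, then $2^i - n = 2(2^{i-1} - k)$ is even, so Definition~\ref{def_sd} gives $c(2^i - n) = c(2^{i-1} - k)$, while Corollary~\ref{props_fni2} gives $f(2k,i) = f(k,i-1)$. The induction hypothesis (applied at bit length $i-1$, with $0 \le k \le 2^{i-1}$) yields $f(k,i-1) = c(2^{i-1} - k)$, so the two sides agree. If $n = 2k+1$, then $2^i - n = 2(2^{i-1} - k - 1) + 1$ is odd, so
\[
    c(2^i - n) = c(2^{i-1} - k - 1) + c(2^{i-1} - k),
\]
and Corollary~\ref{props_fni2} gives $f(2k+1,i) = f(k,i-1) + f(k+1,i-1)$. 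Two applications of the induction hypothesis finish this case; here the boundary cases $k=0$ (where $f(0,i-1) = 1 = c(2^{i-1})$) and $k+1 = 2^{i-1}$ (where $f(2^{i-1},i-1) = 0 = c(0)$) are exactly why I want the strengthened statement.

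I do not expect a serious obstacle: the two recurrences line up perfectly once one checks that $n \mapsto 2^i - n$ preserves parity when $n$ is even and flips it when $n$ is odd in the right way. The only real care needed is at the endpoints, which is why I would state and prove the identity on the closed range $0 \le n \le 2^i$ rather than the open range given in the theorem.
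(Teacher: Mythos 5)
Your proposal is correct and follows essentially the same route as the paper: induction on $i$ with a parity split, matching the recurrence of Corollary~\ref{props_fni2} against Definition~\ref{def_sd}. Your explicit strengthening to the closed range $0 \le n \le 2^i$ is a small but genuine improvement in rigor, since the paper's odd case implicitly uses the boundary values $f(0,i-1)=1=c(2^{i-1})$ and $f(2^{i-1},i-1)=0=c(0)$ without stating them.
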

\begin{proof}We proceed by induction on $i$. 
\\\emph{Case 1}: $n=2m$ is even.
	\begin{align*}
	f(2m,i)&= f(m,i-1)& \text{by Corollary ~\ref{props_fni2}}&\\
	&= c( 2^{i-1}-m )& \text{by induction}\\
	&= c( 2\cdot(2^{i-1}-m) )& \text{by Definition~\ref{def_sd}}\\
	&= c(2^i-n).
	\end{align*}
\emph{Case 2}: $n=2m+1$ is odd.
	\begin{align*}
	f(2m+1,i) &= f(2m,i) + f(2(m+1),i)& \text{by Corollary~\ref{props_fni2}}&\\ 
	&= f(m,i-1) + f(m+1,i-1)& \text{by Corollary~\ref{props_fni2}}\\ 
	&= c(2^{i-1}-m) + c(2^{i-1}-(m+1))& \text{by  induction}\\ 
	&= c(2^{i-1}-(m+1)+1) + c(2^{i-1}-(m+1))\\ 
	&= c(2^{i-1}-(m+1)) + c((2^{i-1}-(m+1))+1)\\
	&= c(2\cdot (2^{i-1}-(m+1))+1)& \text{by Definition~\ref{def_sd}}\\
	&= c(2^i-(2m+1))\\
	&= c(2^i-n).
	\end{align*}
\end{proof}
Theorem \ref{theorem_stern_bsd} shows that the sequence $f(n,i)$ is comprised of the first $2^i+1$ entries of Stern's diatomic sequence in reverse when $n=\{0,\dots,2^i\}$, and is comprised of the first $2^i+1$ entries of Stern's diatomic sequence when $n=\{-2^i,\dots,0\}$. 

The following corollary follows directly from Theorem \ref{theorem_stern_bsd}.
\begin{corollary} 
	\label{Bitflips_distance}
	Let $0<n<2^{k}$, where $k=\lceil{\log_2(n)}\rceil<i$.  Then 
	$$
	f(2^i - n,i) = c(n) = f(2^k-n,k).
	$$
\end{corollary}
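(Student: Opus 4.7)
The plan is to apply Theorem~\ref{theorem_stern_bsd} directly, once with the bit-width $i$ and once with the bit-width $k$. The key observation is that the theorem, which reads $f(m,i)=c(2^i-m)$ for any $0<m<2^i$, can be rewritten by the substitution $m\mapsto 2^i-m$: for any $m$ with $0<m<2^i$, one has
$$
f(2^i-m,\,i) \;=\; c\bigl(2^i-(2^i-m)\bigr)\;=\;c(m).
$$
So every instance of the corollary amounts to checking that the substituted argument lies in the range where Theorem~\ref{theorem_stern_bsd} applies.

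First I would verify the hypotheses for the left equality $f(2^i-n,i)=c(n)$. Since $0<n<2^k$ and $k<i$, we have $0<n<2^i$, which is exactly what is needed to invoke the rewritten theorem at width $i$ with $m=n$. Next, for the right equality $c(n)=f(2^k-n,k)$, the condition $0<n<2^k$ is given directly, so applying the rewritten theorem at width $k$ with $m=n$ yields $f(2^k-n,k)=c(n)$. Chaining the two identities produces the corollary.

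There is no real obstacle here; the statement is a substitution exercise on top of Theorem~\ref{theorem_stern_bsd}. The only thing worth noting is that the hypothesis $k=\lceil\log_2(n)\rceil$ is stronger than what the proof actually uses: any $k$ with $0<n<2^k<2^i$ would suffice. The role of $k=\lceil\log_2(n)\rceil$ in the corollary is to make explicit that $c(n)$ already equals $f$ evaluated at the \emph{minimal} bit-width capable of representing $n$, so increasing the bit-width from $k$ to $i$ and shifting the argument accordingly leaves the count of BSD representations invariant.
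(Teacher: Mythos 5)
Your proof is correct and matches the paper's intent exactly: the paper offers no written proof, stating only that the corollary "follows directly from Theorem~\ref{theorem_stern_bsd}," and your double application of that theorem (at widths $i$ and $k$, after the substitution $m\mapsto 2^i-m$) is precisely that direct argument, with the range checks properly verified. Your side remark that any $k$ with $0<n<2^k<2^i$ would suffice is also accurate.
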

The following are known properties of Stern's diatomic sequence, proven as such by Carlitz  \cite{carlitz1964} and others, and are now seen to be properties of $f(n,i)$. They also follow quickly from Theorem~\ref{theorem_bsd_recursive}. We use them throughout Section~\ref{section_alg}, in developing the algorithm for calculating the number of BSD representations of $n$. 
\begin{align}
    &f({2^i-1},i) =c(1)= 1.\label{bsd_Equation_4}\\
    &f(1,i) =c(2^i-1) = i.\label{bsd_Equation_5}\\
    &f({2^{i-1}+1},i) = c(2^{i-1}-1) = i-1.\label{bsd_Equation_6}
\end{align}
Many more identities on Stern's diatomic sequence exist in the literature and may be used in the investigation of BSD representations of integers. 

We restate a known bound on the number of BSD representations of a given bitlength, proved by Lind \cite{lind1969} and discussed by Ebeid and Hasan \cite{ebeid07} and Northshield \cite{northshield2010stern}. 
\begin{definition}[Fibonacci sequence]\label{def_fib}
    The \emph{Fibonacci sequence} is defined by: $$F_0=0 \text{; } F_1=1 \text{; } F_{n}=F_{n-1}+F_{n-2}.$$ 
\end{definition}

\begin{theorem}\cite{lind1969,ebeid07,northshield2010stern}
	The maximum number of BSD representations of any integer of a given bitlength $i$ is $F_i$, the $i^{th}$ Fibonacci number. The bitlength-$i$ integers having this maximum number of BSD representations are
	$$
	\frac{2^{i-1}+(-1)^i}{3}
	\text{ and }
	\frac{2^{i-1}-(-1)^i}{3}.
	$$
\end{theorem}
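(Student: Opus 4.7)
My plan is to leverage Theorem~\ref{theorem_stern_bsd} to recast the problem as an extremal statement about Stern's diatomic sequence. Since $f(n,i) = c(2^i - n)$, maximizing $f(n,i)$ over positive integers $n < 2^i$ is equivalent to maximizing $c(k)$ as $k$ runs over the dyadic window $\{1,\dots,2^i\}$; by Lemma~\ref{symmetricity} the negative integers contribute the same counts, and the extremal $n$ and $k$ are related by $n = 2^i - k$.

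For the upper bound, I would run a coupled induction on $i$ on the two quantities
\[
M_i \;=\; \max_{1 \le k \le 2^i} c(k), \qquad S_i \;=\; \max_{1 \le k \le 2^i - 1}\bigl(c(k) + c(k+1)\bigr).
\]
Splitting an index by parity and invoking the Stern recurrence $c(2m) = c(m)$, $c(2m+1) = c(m)+c(m+1)$ gives $M_{i+1} = \max(M_i, S_i)$ and the companion inequality $S_{i+1} \le S_i + M_i$, obtained by writing $c(2m)+c(2m+1) = 2c(m)+c(m+1)$ and $c(2m+1)+c(2m+2) = c(m)+2c(m+1)$. Unrolling these two recurrences together produces the Fibonacci growth rate, which delivers the claimed maximum value.

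To identify the extremal indices I would then trace, inductively, which $k$'s realize $M_i$. Each time the maximum is attained, the recurrence forces $k = 2m + 1$ with $\{c(m), c(m+1)\}$ a pair of consecutive Fibonacci numbers, and hence the sequence of optimal $m$'s satisfies a two-branch recurrence $m \mapsto 2m \pm 1$ whose closed-form solution is the Jacobsthal-type expression appearing in the statement after substituting back through $n = 2^i - (2m+1)$. The main obstacle is uniqueness: the argument requires that a Fibonacci-sum equality $c(m)+c(m+1) = F_j$ actually pins $(c(m), c(m+1))$ down to the pair $(F_{j-2}, F_{j-1})$. I would establish this from the classical fact that $\gcd(c(k),c(k+1)) = 1$, implicit in the Calkin--Wilf enumeration of positive rationals as $c(k)/c(k+1)$ recalled earlier; together with the uniqueness of Zeckendorf-like decompositions of Fibonacci numbers along the Fibonacci ladder, this forces the chain of optimal indices to be exactly the Jacobsthal one and so yields precisely the two closed forms in the statement.
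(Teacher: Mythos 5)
First, a framing note: the paper offers no proof of this theorem --- it is imported from Lind, Ebeid--Hasan and Northshield, with only the one-sentence heuristic about Fibonacci growth following the statement --- so you are not paralleling or diverging from an in-paper argument but supplying one. Your reduction via Theorem~\ref{theorem_stern_bsd} to maximizing $c$ over a dyadic block, and the coupled recursion $M_{i+1}=\max(M_i,S_i)=S_i$, $S_{i+1}\le S_i+M_i$, are correct and do yield the Fibonacci upper bound once you exhibit the Jacobsthal indices to show attainment.

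The gap is in your uniqueness step. You propose to conclude from $c(m)+c(m+1)=F_j$ together with $\gcd(c(m),c(m+1))=1$ that $(c(m),c(m+1))$ must be $(F_{j-2},F_{j-1})$ in some order. That implication is false: $F_6=8=1+7$ is a coprime decomposition that is not a pair of consecutive Fibonacci numbers, and neither coprimality of consecutive Stern values nor ``Zeckendorf-like uniqueness'' excludes such pairs a priori. What actually pins the pair down is the equality analysis inside your own inequality: writing $c(2m)+c(2m+1)=\bigl(c(m)+c(m+1)\bigr)+c(m)\le S_i+M_i$, equality forces \emph{both} $c(m)+c(m+1)=S_i$ and $c(m)=M_i$ (and symmetrically $c(m+1)=M_i$ in the odd case). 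So the induction hypothesis must carry the exact set of indices achieving $M_i$ and $S_i$ (two of each, by the palindromic symmetry $c(2^r+j)=c(2^{r+1}-j)$) together with the values of the achieving pairs; this yields the two-branch recurrence $m\mapsto 2m\pm1$ and the Jacobsthal closed form with no gcd input at all. Finally, be aware that the bookkeeping cannot come out ``precisely'' as displayed in the statement: with the paper's convention $F_1=F_2=1$ and the range $0<n<2^i$, the maximum is $F_{i+1}$, attained at $n=(2^{i-1}+(-1)^i)/3$ and $n=(2^{i}-(-1)^{i})/3$, whereas the second expression printed in the theorem, $(2^{i-1}-(-1)^i)/3$, is never an integer because $2^{i-1}\equiv(-1)^{i-1}\pmod 3$; you will have to reconcile your extremal indices with the intended (rather than the printed) formulas.
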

It is clear from the definition why the Fibonacci sequence occurs in the Stern sequence: if $f(n)=F_i$ and $f(n \pm 1) = F_{i-1}$, then $f(n + (n\pm 1))=F_{i+1}$. Fibonacci numbers appear often in the study of BSD representations  \cite{grabner06,Tuma_2015}. 
\section{BSD representations and hyperbinary representations}
The hyperbinary representation of an integer is another ternary representation with a base of $2$, but over $\{0,1,2\}$ rather than $\{1,0,-1\}$, as in the BSD case. 
In this section, we show an explicit one-to-one translation function taking BSD representations of non-negative integers $n$ to hyperbinary representations of $(2^i-1-n)$. 

A short independent proof of Theorem~\ref{theorem_stern_bsd} follows from this translation, calling upon the well-known identification of the hyperbinary sequence with Stern's diatomic sequence.
The translation function of Theorem~\ref{theorem_bsd_hb_rep} may also be used to exploit known properties of hyperbinary representations, and apply them to BSD representations.
\begin{definition}[Hyperbinary representation of an integer]
	An integer $n$ is in \emph{hyperbinary representation} when
	$$
		n=\sum_{j=0}^{i-1}h_j2^j \text{, where $i\ge\ceil{\log_2(n)}$, and $h_j \in \{0,1,2\}$.}
	$$
\end{definition}
\begin{definition}[Hyperbinary sequence]
	The \emph{hyperbinary sequence} $h(n)$ is the number of ways $n$ may be  expressed in hyperbinary representation.
\end{definition}
There is a simple correspondence between BSD and hyperbinary representations. They come in pairs: subtracting a BSD representation of $n$ componentwise from the $i$-bit all-$1$ vector gives a hyperbinary representation of $2^i-1-n$, and vice versa. We formalize this observation in Theorem~\ref{theorem_bsd_hb_rep} and  Corollary~\ref{cor_1-1_bsd_hyper}. 
\begin{theorem}\label{theorem_bsd_hb_rep}
    $(b_{i-1}\cdots b_0)$ is an $i$-bit BSD representation of a non-negative integer $n$ if and only if $((1-b_{i-1})\cdots (1-b_0))$ is an $i$-bit hyperbinary representation of $2^i-1-n$.
\end{theorem}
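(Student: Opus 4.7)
The plan is to prove the two directions by direct algebraic manipulation, exploiting the fact that the digit-wise map $b \mapsto 1-b$ is an involution that swaps the BSD digit set $\{-1,0,1\}$ with the hyperbinary digit set $\{0,1,2\}$. Concretely, I would first verify the bijection at the digit level: $b \in \{-1,0,1\}$ if and only if $1-b \in \{2,1,0\}$, and $1-(1-b) = b$, so applying the map twice returns the original digit.

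Next, I would establish the one key identity using linearity of the positional sum together with the geometric series $\sum_{j=0}^{i-1} 2^j = 2^i - 1$:
$$
\sum_{j=0}^{i-1} (1-b_j)\, 2^j \;=\; \sum_{j=0}^{i-1} 2^j \;-\; \sum_{j=0}^{i-1} b_j\, 2^j \;=\; (2^i-1) - \sum_{j=0}^{i-1} b_j\, 2^j.
$$
This single identity drives both directions. For the forward direction, assuming $(b_{i-1}\cdots b_0)$ is an $i$-bit BSD representation of the non-negative integer $n$, each new digit $1-b_j$ lies in $\{0,1,2\}$, and the identity above evaluates the new sum to $2^i-1-n$. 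This quantity is non-negative because any $i$-bit BSD integer satisfies $n \le 2^i-1$ (the maximum attained by the all-ones vector), so $((1-b_{i-1})\cdots(1-b_0))$ is a valid $i$-bit hyperbinary representation of $2^i-1-n$.

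For the reverse direction, I would apply the same involution to a hyperbinary representation of $2^i-1-n$: each digit $1-b_j$ maps back to $b_j \in \{-1,0,1\}$, and the identity now gives $(2^i-1) - (2^i-1-n) = n$, recovering a $i$-bit BSD representation of $n$. I do not anticipate a substantive obstacle here: the argument is a single computation combined with a pointwise bijection, with the only routine check being that the bitlength $i$ and the digit-range constraints are preserved, both of which are immediate because the map acts independently on each position and is its own inverse.
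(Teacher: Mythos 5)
Your proposal is correct and follows essentially the same route as the paper's proof: the digit-wise involution $b_j \leftrightarrow 1-b_j$ exchanges the digit sets $\{-1,0,1\}$ and $\{0,1,2\}$, and the identity $\sum_{j}(1-b_j)2^j = (2^i-1) - \sum_j b_j 2^j$ converts a representation of $n$ into one of $2^i-1-n$ and back. You simply spell out the geometric-series computation and the range checks that the paper leaves implicit.
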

\begin{proof}
    $(b_{i-1}\cdots b_0)$ represents $n$ as a sum of $i$ powers of 2 if and only if $((1-b_{i-1})\cdots (1-b_0))$ represents $2^{i}-1-n$ as a sum of $i$ powers of 2, and
    $b_j \in \{1,0, -1\}$ if and only if $(1-b_j) \in \{0,1, 2\}$. The theorem follows.
\end{proof}
\begin{corollary}\label{cor_1-1_bsd_hyper}
    There is a one-to-one correspondence between the $i$-bit BSD representations of $n$ and the $i$-bit hyperbinary representations of $(2^i-1-n)$, so $$f(n,i) = h(2^i-1-n).$$
\end{corollary}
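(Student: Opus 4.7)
The plan is to deduce the corollary immediately from Theorem~\ref{theorem_bsd_hb_rep} by exhibiting an explicit bijection between the two sets of representations and then reading off the cardinalities.

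First, I would define the componentwise map
$\phi \colon (b_{i-1}, \ldots, b_0) \mapsto (1-b_{i-1}, \ldots, 1-b_0)$.
Since $b_j \in \{-1,0,1\}$ if and only if $1-b_j \in \{0,1,2\}$, this map sends $i$-bit strings over $\{-1,0,1\}$ to $i$-bit strings over $\{0,1,2\}$. Moreover, $\phi$ is its own inverse up to the symmetric map on the hyperbinary side, namely $\psi \colon (h_{i-1}, \ldots, h_0) \mapsto (1-h_{i-1}, \ldots, 1-h_0)$, because $1-(1-b_j)=b_j$. So $\phi$ is a bijection from $\{-1,0,1\}^i$ to $\{0,1,2\}^i$.

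Next, I would invoke Theorem~\ref{theorem_bsd_hb_rep} to conclude that $\phi$ restricts to a bijection between the subset of strings in $\{-1,0,1\}^i$ that are $i$-bit BSD representations of $n$ and the subset of strings in $\{0,1,2\}^i$ that are $i$-bit hyperbinary representations of $2^i-1-n$. This is exactly the biconditional supplied by Theorem~\ref{theorem_bsd_hb_rep}: the condition on the digits and the condition on the represented value are each preserved under $\phi$ and under $\psi$.

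Finally, since $\phi$ restricts to a bijection between these two finite sets, their cardinalities agree. By the definitions of $f$ and $h$, the cardinality of the first set is $f(n,i)$ and that of the second is $h(2^i-1-n)$, giving $f(n,i) = h(2^i-1-n)$. There is no real obstacle here; the entire content of the corollary is the observation that the digitwise correspondence from Theorem~\ref{theorem_bsd_hb_rep} is a bijection, so the proof is essentially a one-line application of that theorem together with a counting argument.
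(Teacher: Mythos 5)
Your proof is correct and takes essentially the same approach as the paper, which leaves the corollary unproved precisely because it is the immediate bijective consequence of Theorem~\ref{theorem_bsd_hb_rep} that you spell out. Your explicit verification that the digitwise map $b_j \mapsto 1-b_j$ is an involution (hence a bijection) restricting to the two representation sets is exactly the intended one-line argument.
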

The translation function given in Theorem~\ref{theorem_bsd_hb_rep} can be expressed as a direct translation between the digits of a BSD representation $(b_{i-1}\cdots b_0)$ of an integer $n$ and the digits of the corresponding hyperbinary representation, $(h_{i-1}\cdots h_0)$ of $2^i-1-n$, given by
\begin{equation}\label{eq_translation}
\begin{aligned}
   1\leftrightarrow 0&,\\ 0\leftrightarrow 1&,\\
  -1\leftrightarrow 2&. 
\end{aligned}
\end{equation}
The already-known Theorem \ref{theorem_stern_hyperbinary_orig} identifies the hyperbinary sequence with Stern's diatomic sequence. It was shown by Carlitz in \cite{carlitz1964}, Lind in \cite{lind1969}, and Reznick in \cite{reznick90}. In \cite{stanley2010}, Stanley and Wilf further refine this, and derive formulas for the partitions of $n$ into powers of 2 in which exactly $k$ parts have multiplicity $2$.

\begin{theorem}\label{theorem_stern_hyperbinary_orig}\cite{carlitz1964,lind1969,reznick90} The hyperbinary sequence is Stern's diatomic sequence, offset by 1: $$h(n)=c(n+1).$$
\end{theorem}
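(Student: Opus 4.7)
The plan is to prove $h(n) = c(n+1)$ directly by strong induction on $n$, using a case analysis on the lowest-order digit of a hyperbinary representation to mirror the Stern recurrence. The base cases are immediate: $h(0) = 1 = c(1)$ since only the all-zero representation works, and $h(1) = 1 = c(2)$ since $c(2) = c(1) = 1$.

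For the inductive step, write a hyperbinary representation of $n$ as $(\ldots, h_1, h_0)$ with $h_j \in \{0,1,2\}$. When $n = 2m$ is even with $m \ge 1$, parity forces $h_0 \in \{0, 2\}$; the remaining digits $(h_j)_{j \ge 1}$ then form a hyperbinary representation of $m$ in the first case and of $m-1$ in the second. Stripping $h_0$ and shifting yields a bijection with the disjoint union of these two sets, so $h(2m) = h(m) + h(m-1)$, which by the inductive hypothesis equals $c(m+1) + c(m) = c(2m+1)$ by the Stern recurrence for odd indices. When $n = 2m+1$ is odd, parity forces $h_0 = 1$, and the higher digits form a hyperbinary representation of $m$; thus $h(2m+1) = h(m) = c(m+1) = c(2m+2)$ using $c(2k) = c(k)$.

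The main obstacle is pure bookkeeping: verifying that strip-and-shift really gives a bijection in each sub-case. Given $m$ and a choice of $h_0$, the representation $(h_j)_{j \ge 1}$ of $(n - h_0)/2$ uniquely determines the full representation of $n$, so the map is invertible on each of the disjoint pieces, and the two even sub-cases $h_0 = 0$ and $h_0 = 2$ cannot double-count. One also has to confirm the hyperbinary-counting convention (say, ignoring trailing zero pads so the count is finite), but once fixed this is preserved under the shift operation.

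As an alternative route, one could derive the identity from the paper's own results by combining Theorem~\ref{theorem_stern_bsd} with Corollary~\ref{cor_1-1_bsd_hyper}: for any $n$, choose $i$ with $n < 2^i$ and set $k = 2^i - 1 - n$; then $h(k) = f(n,i) = c(2^i - n) = c(k+1)$. This would be a valid derivation here because Theorem~\ref{theorem_stern_bsd} is proved in the paper independently of the hyperbinary identification, but the direct induction above is cleaner and matches the classical proofs cited.
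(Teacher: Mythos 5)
Your direct induction is correct, but note that the paper does not prove this statement at all: Theorem~\ref{theorem_stern_hyperbinary_orig} is stated as a known result and attributed to Carlitz, Lind, and Reznick, precisely so that it can be combined with Corollary~\ref{cor_1-1_bsd_hyper} to give a second, independent proof of Theorem~\ref{theorem_stern_bsd}. What you have written is essentially the classical argument from those references: the parity of $n$ forces $h_0\in\{0,2\}$ or $h_0=1$, the strip-and-shift map is a bijection onto hyperbinary representations of $m$ or $m-1$, and the resulting recurrences $h(2m)=h(m)+h(m-1)$ and $h(2m+1)=h(m)$ line up exactly with $c(2m+1)=c(m)+c(m+1)$ and $c(2m+2)=c(m+1)$ under the shift $n\mapsto n+1$. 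The base cases and the remark about fixing the counting convention (identifying representations that differ only by leading zeros, so that $h(n)$ is finite and the shift is well defined) are handled appropriately. Your observation about the alternative route is also accurate and worth keeping in mind: since the paper proves Theorem~\ref{theorem_stern_bsd} directly from Corollary~\ref{props_fni2} without using hyperbinary representations, and Corollary~\ref{cor_1-1_bsd_hyper} is proved by an explicit digitwise bijection, the chain $h(2^i-1-n)=f(n,i)=c(2^i-n)$ with $i$ chosen large enough recovers $h(k)=c(k+1)$ for every $k\ge 0$ without circularity; this would effectively reverse the paper's logical order, deriving the classical hyperbinary identity from the BSD result rather than the other way around.
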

From the correspondence between BSD and hyperbinary representations in Corollary~\ref{cor_1-1_bsd_hyper}, and the previously known identification of the hyperbinary sequence with the Stern sequence in Theorem~\ref{theorem_stern_hyperbinary_orig}, we immediately obtain a simple and independent proof of the result in Theorem~\ref{theorem_stern_bsd}, restated here.
\begin{corollary}[Theorem~\ref{theorem_stern_bsd}] 
	\label{bf_stern_via_h}
$
f(n,i) = h(2^i-1-n) = c(2^i-n).
$
\end{corollary}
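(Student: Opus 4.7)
The plan is to assemble the corollary from two results already in hand: Corollary~\ref{cor_1-1_bsd_hyper}, which gives the bijective identity $f(n,i) = h(2^i-1-n)$ via the componentwise translation $b_j \mapsto 1-b_j$ of Theorem~\ref{theorem_bsd_hb_rep}; and Theorem~\ref{theorem_stern_hyperbinary_orig}, the classical identification $h(m) = c(m+1)$. First I would write the left equality $f(n,i) = h(2^i-1-n)$ by citing the corollary, and then substitute $m = 2^i-1-n$ into the hyperbinary--Stern identification to obtain $h(2^i-1-n) = c((2^i-1-n)+1) = c(2^i-n)$. Concatenating the two gives the claim.

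There is essentially no obstacle; the entire content of the proof is a one-line substitution. The step worth flagging for the reader is the cancellation of index shifts: the $-1$ introduced by the BSD-to-hyperbinary digit map lines up exactly with the $+1$ offset in the hyperbinary-to-Stern identification, so no off-by-one correction is needed. This is precisely what makes Theorem~\ref{theorem_stern_bsd} drop out of the combinatorial bijection without the inductive case analysis used in its original proof, and it is the observation that motivates stating the corollary in this form rather than simply noting that Theorem~\ref{theorem_stern_bsd} is reproved.
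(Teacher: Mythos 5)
Your proof is correct and is exactly the paper's argument: the corollary is obtained by combining Corollary~\ref{cor_1-1_bsd_hyper} with the substitution $m = 2^i-1-n$ in Theorem~\ref{theorem_stern_hyperbinary_orig}. Your remark about the $-1$ and $+1$ offsets cancelling matches the paper's point that this yields an independent, non-inductive proof of Theorem~\ref{theorem_stern_bsd}.
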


\section{An  algorithm for counting BSD representations of $n$}\label{section_alg}
In this section, we use Equations~\ref{bsd_Equation_4}-~\ref{bsd_Equation_6}, which follow from Theorem~\ref{theorem_stern_bsd}, to derive a simple and direct $\mathcal{O}(i)$ algorithm for calculating the number of $i$-bit BSD representations of $n$. This algorithm is equivalent to calculating the $r^{\text{th}}$ element in Stern's diatomic sequence, where $i=\ceil{\log_2(r)}$, and $n=2^i-r$. A refinement of this algorithm then leads to the simple  Algorithm~\ref{sb_alg}, calculating the number of $i$-bit BSD representations of $n$. 
\begin{algorithm}[H]\caption{Number of BSD representations on $i$ bits for $n$}\label{sb_alg}
	\begin{algorithmic}[0] 
		\Function{BSD}{$n,i$} \funclabel{sb_alg:n,i}
		\State $n \gets |m|$ \Comment Lemma \ref{symmetricity} and Equation ~\ref{bsd_Equation_3}
		\State $i \gets i-j$ \Comment Equation ~\ref{bsd_Equation_3}
		\State $num\_reps \gets 1$
		\If{$n<2^i$}
		\State $lower\gets 1; upper\gets 0 $ \Comment Equations ~\ref{bsd_Equation_1} and  ~\ref{bsd_Equation_2}
		\For{$\ell \gets 1, i$} 
		\State $num\_reps \gets lower+upper$ \Comment Theorem \ref{repeatedsum}
		\If{$n_{i-\ell}=0$}
		\State $upper\gets num\_reps $		
		\ElsIf {$n_{i-\ell}=1$}
		\State $lower\gets num\_reps $		
		\EndIf
		\EndFor
		\EndIf
		\State \textbf{return} $num\_reps$
		\EndFunction
	\end{algorithmic}
\end{algorithm}

Theorem \ref{repeatedsum} gives $f(n,i)$ for calculating the number of  $i$-bit BSD  representations for any integer $n$, and gives rise to Algorithm \ref{sb_alg}, which is $\mathcal{O}(i)$,  and is $\mathcal{O}(\log(n))$ if $i=\ceil{\log_2(n)}$. 
This theorem is a quick  consequence of the definition of BSDs.   
\begin{theorem} 
	\label{repeatedsum}
	Let $n=2^j\cdot m$, $m$ odd, and let $j<i$.
	Then $$f(n,i) = f(n-2^j,i)+ f(n+2^j,i).$$
\end{theorem}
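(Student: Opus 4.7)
The plan is to obtain the identity by pushing both sides through Equation \ref{bsd_Equation_3} and the odd case of Corollary \ref{props_fni2}, exploiting the fact that the 2-adic factorizations of $n$, $n-2^j$, and $n+2^j$ are all transparent once we write $m = 2m' + 1$. No combinatorial bijection is needed; the algebraic identities already in hand do all the work.

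First I would expand the left-hand side. By Equation \ref{bsd_Equation_3}, $f(n,i) = f(2^j m, i) = f(m, i-j)$, and the hypothesis $j < i$ guarantees $i - j \ge 1$, so $i - j - 1 \ge 0$. Since $m$ is odd I write $m = 2m'+1$ with $m' \ge 0$, and then the odd branch of Corollary \ref{props_fni2} gives
\[
f(m, i-j) = f(m', i-j-1) + f(m'+1, i-j-1).
\]

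Next I would identify the two summands on the right-hand side. Because $m-1 = 2m'$ and $m+1 = 2(m'+1)$, we have $n - 2^j = 2^j(m-1) = 2^{j+1} m'$ and $n + 2^j = 2^j(m+1) = 2^{j+1}(m'+1)$. Applying Equation \ref{bsd_Equation_3} to each gives $f(n - 2^j, i) = f(m', i - j - 1)$ and $f(n + 2^j, i) = f(m'+1, i - j - 1)$. Adding these and comparing with the expansion of $f(n,i)$ from the previous paragraph yields the theorem.

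The only friction, and it is minor, is bookkeeping at the boundaries. If $m' = 0$ (so $n = 2^j$), then $n - 2^j = 0$ and the needed value $f(0, i-j-1) = 1$ is supplied by Equation \ref{bsd_Equation_2}; if $i - j - 1 = 0$ and $m' + 1 \ge 1$, then $f(m'+1, 0) = 0$ by Equation \ref{bsd_Equation_1}, consistent with the fact that in that regime $n + 2^j = 2^i$ admits no $i$-bit BSD representation. So the edge cases are harmless, and the main calculation is a three-line chain of identities.
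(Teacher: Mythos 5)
Your proof is correct and follows essentially the same route as the paper's: reduce $f(n,i)$ to $f(m,i-j)$ via Equation~\ref{bsd_Equation_3}, split with the odd case of Corollary~\ref{props_fni2}, and reassemble the two summands as $f(n-2^j,i)$ and $f(n+2^j,i)$ via Equation~\ref{bsd_Equation_3} again. The only cosmetic difference is that the paper states the middle step at the level of $f(m-1,i-j)+f(m+1,i-j)$ while you work one level down with $m'$ and $m'+1$; your attention to the $m'=0$ and $i-j-1=0$ boundary cases is a nice bonus the paper omits.
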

\begin{proof}
	\begin{align*}
	f(n,i) &= f(2^jm,i) \\
	&= f(m, i-j)&\text{ by Equation ~\ref{bsd_Equation_3}}&\\
	&=  f(m-1, i-j)+ f(m+1, i-j)&\text{ by Corollary~\ref{props_fni2}}\\
	&= f(2^j(m-1),i)+ f(2^j(m+1),i)&\text{ by Equation~\ref{bsd_Equation_3}}\\
	&= f(n-2^j,i)+ f(n+2^j,i).
	\end{align*}
\end{proof}
We derive Algorithm \ref{sb_alg} in this manner. Let  $n=2^jm$ be an integer with $m$ odd; $j$ and thus $m$ can be calculated bitwise in $\mathcal{O}(\log(n))$. Let $-2^i<n<2^i$, so $i\ge\floor{\log_2(n)}$. We calculate repeatedly on intervals starting with $[0,2^i]$, and apply Theorem \ref{repeatedsum} to midpoints, replacing either the upper or lower bound on the interval with the midpoint, until we reach the desired value for $m$.

The following two lemmas and Theorem~\ref{Bitflips_arithmetic} refine the $\mathcal{O}(\log(i))$ Algorithm \ref{sb_alg} to the  $\mathcal{O}(\log(n))$ Algorithm \ref{sb_alg_log}. 
\begin{lemma}
	\label{Bitflips_2iless1_closed}
	$f(2^k-1,i) = 1+(i-k)k$, when $i\ge{k}$.
\end{lemma}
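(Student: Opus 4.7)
The plan is to prove this by induction on $k$, for all $i \ge k$ simultaneously. The base case $k=0$ is immediate from Equation~\ref{bsd_Equation_2}, since $f(2^0-1,i) = f(0,i) = 1 = 1 + (i-0)\cdot 0$.

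For the inductive step, the key observation is that $2^k-1 = 2(2^{k-1}-1)+1$ is odd, so the odd-case identity of Corollary~\ref{props_fni2} applies directly with $n=2^{k-1}-1$ and gives
$$ f(2^k-1,\,i) \;=\; f(2^{k-1}-1,\,i-1) \;+\; f(2^{k-1},\,i-1). $$
The first term is handled by the inductive hypothesis applied to $k-1$ with bitlength $i-1 \ge k-1$, contributing $1 + (i-k)(k-1)$. The second term is handled by repeatedly applying Equation~\ref{bsd_Equation_3} (or equivalently the even-case identity of Corollary~\ref{props_fni2}) to strip off factors of $2$, reducing $f(2^{k-1},i-1)$ to $f(1, i-k)$, which equals $i-k$ by Equation~\ref{bsd_Equation_5}. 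Combining the two contributions gives $1 + (i-k)(k-1) + (i-k) = 1 + (i-k)k$, which is the desired closed form.

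I do not anticipate any real obstacle; this is essentially a bookkeeping argument once the decomposition $2^k-1 = 2(2^{k-1}-1)+1$ is noticed. The only points to verify are that each invocation is legal: the inductive hypothesis requires $i-1 \ge k-1$ (which follows from $i \ge k$), and the reduction of $f(2^{k-1},i-1)$ to $f(1,i-k)$ requires $i-k \ge 1$, with the edge case $i=k$ handled directly by Equation~\ref{bsd_Equation_4} (giving $f(2^k-1,k)=1=1+0\cdot k$). As an alternative route, one could instead pass through Theorem~\ref{theorem_stern_bsd} and compute $c(2^i - 2^k + 1)$ using the defining recurrence for Stern's sequence: the odd rule peels off a contribution of $c(2^{i-1}-2^{k-1}) = c(2^{i-k}-1) = i-k$, leaving $c(2^{i-1}-2^{k-1}+1)$, so the pair $(i,k)$ descends to $(i-1,k-1)$ while accumulating $i-k$ at each step, bottoming out at $c(2^{i-k}) = 1$ and yielding the same closed form $1 + k(i-k)$.
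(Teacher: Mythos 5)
Your proof is correct and is essentially the paper's own argument: both apply the odd-case identity of Corollary~\ref{props_fni2} to $2^k-1=2(2^{k-1}-1)+1$, reduce the term $f(2^{k-1},i-1)$ to $f(1,i-k)=i-k$ via Equations~\ref{bsd_Equation_3} and~\ref{bsd_Equation_5}, and close the induction by stepping $(k,i)\mapsto(k-1,i-1)$. Your framing of the induction (on $k$, base case $k=0$, with the $i=k$ edge case checked explicitly) is if anything a cleaner statement of what the paper does, since the paper nominally inducts on $i$ with base case $i=k$ even though its inductive step decrements $k$ as well.
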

\begin{proof}
	Proof via induction on $i$. This is true for $i=k$, by Equation~\ref{bsd_Equation_5}.
	\begin{align*}
	f(2^k-1,i) &=  f(2^k-2,i) +  f(2^k,i)&\text{by Corollary~\ref{props_fni2}}&\\
	&= f(2^{k-1}-1,i-1) +  f(2^k,i)&\text{by Corollary~\ref{props_fni2}}\\
	&= f(2^{k-1}-1,i-1) +  (i-k)&\text{by Equations~\ref{bsd_Equation_3} and \ref{bsd_Equation_6}}\\
	&= 1+((i-1)-(k-1))(k-1) +  (i-k)&\text{by  induction}\\
	&= 1+((i-k)k.
	\end{align*}
\end{proof}
\begin{lemma}
	\label{Bitflips_2i-1plus1_closed}
	$f(2^{k-1}+1,i) = (k-1)+(i-k)k$, when $i\ge{k}$.
\end{lemma}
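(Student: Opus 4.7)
The plan is to prove this lemma by induction on $k$, mirroring the structure of the proof of Lemma~\ref{Bitflips_2iless1_closed}. The recurrences in Corollary~\ref{props_fni2} drop the ``$n$'' argument by a factor of two and the ``$i$'' argument by one, so $k$ is the cleanest induction variable, with the understanding that $i \ge k$ throughout.

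For the base case $k = 1$, the integer $2^{k-1} + 1 = 2$ is even, so Corollary~\ref{props_fni2} together with Equation~\ref{bsd_Equation_5} gives $f(2, i) = f(1, i-1) = i - 1$, which matches the formula $(1-1) + (i-1)\cdot 1 = i - 1$. For the inductive step, fix $k \ge 2$ and $i \ge k$, and assume the lemma holds for $k - 1$ (for all admissible bitlengths). Since $2^{k-1} + 1$ is odd, Corollary~\ref{props_fni2} yields
$$
f(2^{k-1}+1, i) \;=\; f(2^{k-2}, i-1) \;+\; f(2^{k-2}+1, i-1).
$$
The first summand reduces via Equation~\ref{bsd_Equation_3} to $f(1, i-k+1)$, which is $i - k + 1$ by Equation~\ref{bsd_Equation_5}. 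The second summand is the lemma applied at parameters $(k-1, i-1)$ (valid because $i - 1 \ge k - 1$), so by induction it equals $(k-2) + ((i-1)-(k-1))(k-1) = (k-2) + (i-k)(k-1)$.

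Adding the two pieces and regrouping gives
$$
(i-k+1) + (k-2) + (i-k)(k-1) \;=\; (k-1) + (i-k)\bigl(1 + (k-1)\bigr) \;=\; (k-1) + (i-k)k,
$$
which is the claimed formula. I do not expect a serious obstacle here: the argument is parallel to the previous lemma, and the only bookkeeping concern is to ensure that both the parity split (even for $k=1$, odd for $k \ge 2$) and the range condition $i - 1 \ge k - 1$ are respected when invoking the inductive hypothesis. The final simplification is straightforward algebra.
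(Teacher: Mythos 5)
Your proof is correct and follows essentially the same route as the paper's: split $2^{k-1}+1$ via the odd-case recurrence of Corollary~\ref{props_fni2}, evaluate the power-of-two piece in closed form via Equations~\ref{bsd_Equation_3} and~\ref{bsd_Equation_5}, and apply the inductive hypothesis at parameters $(k-1,i-1)$. The only (harmless) differences are that you organize the induction on $k$ with base case $k=1$ rather than on $i$ with base case $i=k$ — which has the minor virtue of making the $k=1$ (even) case explicit — and that you apply the odd rule in one step where the paper takes two.
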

\begin{proof} 
	Proof via induction on $i$. This is true for $i=k$, by Equation~\ref{bsd_Equation_6}.
	\begin{align*}
	f(2^{k-1}+1,i) &=  f(2^{k-1},i) +  f(2^{k-1}+2,i)&\text{by Corollary~\ref{props_fni2}}&\\
	&=  (i-(k-1)) +  f(2^{k-1}+2,i)&\text{by Equations~\ref{bsd_Equation_3} and \ref{bsd_Equation_6}}\\
	&=  (i-(k-1)) +  f(2^{k-2}+1,i-1)&\text{by Corollary ~\ref{props_fni2}}\\
	&=  (i-(k-1))+(k-2)\\ &\qquad\qquad+((i-1)-(k-1))(k-1) &\text{by induction}\\
	&=  (i-1)+(i-k)(k-1) \\
	&=  (i-1)+(i-k)k-(i-k)\\
	&=  (k-1)+(i-k)k.
	\end{align*}	
\end{proof}
Theorem \ref{Bitflips_arithmetic} gives an arithmetic progression on $f(n,i)$ for any $n$. $f(n,\lceil{\log_2(n)}\rceil)$ (or $c(2^k-n)$) is the first term and $f(2^{\lceil{\log_2(n)}\rceil}-n,\lceil{\log_2(n)}\rceil)=c(n)$ is the difference. The fact that it is an arithmetic progression is attributed to Takashi Tokita by Sloane in the Online Encyclopedia of Integer Sequences (A002487) \cite{oeis-web}, but is not proved there. It is noted and partially proved by Northshield in \cite{northshield2010stern}. We refine and prove it here, stating the progression's starting points explicitly, and use it to extend Algorithm \ref{sb_alg} to the more efficient Algorithm \ref{sb_alg_log}. 
	\begin{theorem}
		\label{Bitflips_arithmetic}
		Let $0<n<2^i$, with $k=\lceil{\log_2(n)}\rceil$.  Then 
		$$
		f(n,i) = f(n,k) + (i-k)\cdot f(2^k-n,k).
		$$
	\end{theorem}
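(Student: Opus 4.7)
The plan is to prove the identity by induction on the gap $i-k$. The base case $i=k$ is immediate: the coefficient $(i-k)$ vanishes and the equation collapses to $f(n,k)=f(n,k)$.

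For the inductive step, I would fix $n$ with $k=\lceil\log_2(n)\rceil$, assume the statement for some $i\ge k$, and try to derive it for $i+1$. The natural lever is Theorem~\ref{theorem_bsd_recursive}: since $n<2^k\le 2^i$, we have $n-2^i<0$, so $|n-2^i|=2^i-n$ and therefore
$$
f(n,i+1) \;=\; f(2^i-n,\,i)+f(n,i).
$$
The inductive hypothesis rewrites the second summand as $f(n,k)+(i-k)\,f(2^k-n,k)$, so the whole problem reduces to identifying the first summand $f(2^i-n,i)$ with $f(2^k-n,k)$.

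This final identification is exactly the content of Corollary~\ref{Bitflips_distance}: it asserts $f(2^i-n,i)=c(n)=f(2^k-n,k)$ whenever $k=\lceil\log_2(n)\rceil<i$. Substituting gives
$$
f(n,i+1) \;=\; f(2^k-n,k) + f(n,k) + (i-k)\,f(2^k-n,k) \;=\; f(n,k)+\bigl((i+1)-k\bigr)\,f(2^k-n,k),
$$
which closes the induction. One should also verify the edge case $n=2^k$ (a pure power of two), where $f(n,k)=0$ by Equation~\ref{bsd_Equation_1} and $f(2^k-n,k)=f(0,k)=1$, so the formula correctly yields $f(2^k,i)=i-k$; this matches $f(1,i-k)=i-k$ obtained from Equations~\ref{bsd_Equation_3} and \ref{bsd_Equation_5}.

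There is no serious obstacle here: the proof is essentially a one-line application of the BSD recurrence plus the invariance encoded in Corollary~\ref{Bitflips_distance}. The only thing that requires a brief check is that the hypothesis $n<2^k$ (equivalently $k=\lceil\log_2(n)\rceil$) ensures $n<2^i$ throughout the induction, so that Theorem~\ref{theorem_bsd_recursive} applies with the absolute value unwrapping to $2^i-n$ rather than $n-2^i$.
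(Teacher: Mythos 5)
Your proof is correct, and it takes a genuinely different route from the paper's. The paper argues by parity cases on $n$ (even, odd, and the boundary case $n=2^{k-1}+1$ where a neighbor $2m=2^{k-1}$ drops a bit-length), which forces it to first establish the closed forms of Lemmas~\ref{Bitflips_2iless1_closed} and~\ref{Bitflips_2i-1plus1_closed}. You instead induct on the gap $i-k$ and peel off the leading bit with Theorem~\ref{theorem_bsd_recursive}, so the whole content reduces to the single identification $f(2^i-n,i)=f(2^k-n,k)$, which is exactly Corollary~\ref{Bitflips_distance}. This makes the arithmetic-progression structure transparent: each additional leading bit position contributes exactly one new batch of representations (those with a leading $1$), and that batch has constant size $c(n)$. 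The trade-off is that your argument routes through Corollary~\ref{Bitflips_distance} and hence through the Stern correspondence of Theorem~\ref{theorem_stern_bsd}, whereas the paper's proof stays (essentially) inside the elementary recurrences for $f$; since the corollary is established earlier in the paper there is no circularity, but a reader wanting a Stern-free proof would need to supply $f(2^i-n,i)=f(2^k-n,k)$ by a separate induction. Two small points you handled correctly and should keep explicit: the case $n=2^k$ (where $f(n,k)=0$ and Corollary~\ref{Bitflips_distance} does not apply, but direct computation via Equations~\ref{bsd_Equation_3} and~\ref{bsd_Equation_5} gives $f(2^k,i)=i-k$), and the fact that at the first inductive step $i=k$ the identification $f(2^i-n,i)=f(2^k-n,k)$ is a tautology rather than an instance of the corollary (which requires $k<i$).
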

\begin{proof}
	\emph{Case  1.}  $n=2m$ is even. Then $\ceil{\log(m)}=k-1$, and
	\begin{align*}
	f(n,i)&=f(2m,i) \\
	&= f(m,i-1)  &\text{ by Corollary ~\ref{props_fni2}}\\
	&= f(m,k-1)\\
	&\qquad+ ((i-1)-(k-1))f(2^{k-1}-m,k-1)&\text{by induction}&\\
	&= f(2m,k) + (i-k)f(2^k-2m,k)&\text{by Corollary~\ref{props_fni2}}\\
	&= f(n,k) + (i-k)f(2^k-n,k).
	\end{align*}	
	\emph{Case  2.}  $n=2m+1$ is odd, $n\ne 2^{k-1}+1$. Then $\ceil{\log(2m)}=\ceil{\log(2m+2)}=k$, and
	\begin{align*}
	f(n,i)&=f(2m+1,i)\\ 
	&= f(2m,i)+f(2(m+1),i)&\text{by Corollary~\ref{props_fni2}}&\\
	&= f(2m,k)+(i-k)f(2^k-2m,k)
	\\&\qquad+ f(2m+2,k)+(i-k)f(2^k-(2m+2),k)&\text{by Case 1}\\
	&= f(n,k)+(i-k)f(2^k-n,k). &\text{ by Corollary~\ref{props_fni2}}
	\end{align*}	
	\emph{Case  3.}  $n=2^{k-1}+1$. 
	\begin{align*}
	f(n,i)&=f(2^{k-1}+1,i)\\
	&= (k-1)+(i-k)k &\text{by Lemma \ref{Bitflips_2i-1plus1_closed}}&\\
	&= (k-1)+(i-k)f(2^{k-1}-1,k)&\text{by Lemma \ref{Bitflips_2iless1_closed}} \\
	&= (k-1)+(i-k)f(2^k-n,k)\\
	&= f(n,k)+(i-k)f(2^k-n,k). &\text{by Lemma \ref{Bitflips_2i-1plus1_closed}}\\
	\end{align*}	
\end{proof}
Theorem \ref{Bitflips_arithmetic}, with $k=\ceil{\log_2(n)}$, gives  Algorithm \ref{sb_alg_log} for calculating the number of BSD representations of $n$. Algorithm \ref{sb_alg} is $\mathcal{O}(\log(n))$ when calculated on $k=\ceil{\log_2(n)}$ bits, so Algorithm \ref{sb_alg_log} is $\mathcal{O}(\log(n))$ as well.
\begin{algorithm}[H]\caption{Number of BSD representations on $i$ bits for $n$ (logarithmic)}\label{sb_alg_log}
	\begin{algorithmic}[0] 
		\Function{BSD-fast}{$n,i$} 
		\State $n \gets |n|$ 
		\State $k \gets \ceil{\log_2(n)}$
		\State $num\_reps \gets \Call{BSD}{n,k} + (i-k)\times\Call{BSD}{2^k-n,k}$ 		\State \textbf{return} $num\_reps$
		\EndFunction
	\end{algorithmic}
\end{algorithm}

\section{Acknowledgements}
	The author wishes to thank Vanessa Job for useful discussions. 
	The author would also like to thank the anonymous referees for their careful readings and helpful suggestions.

 \section*{Conflict of interest}
The author declares that she has no conflict of interest.

\bibliographystyle{spmpsci}      
\bibliography{stern-diatomic}

\end{document}